\numberwithin{equation}{section}
\numberwithin{figure}{section}
\newenvironment{lyxlist}[1]
{\begin{list}{}
{\settowidth{\labelwidth}{#1}
 \setlength{\leftmargin}{\labelwidth}
 \addtolength{\leftmargin}{\labelsep}
 }}
{\end{list}}
  \theoremstyle{definition}
  \newtheorem{defn}{\protect\definitionname}
  \theoremstyle{plain}
  \newtheorem{lem}{\protect\lemmaname}
  \theoremstyle{plain}
  \newtheorem{prop}{\protect\propositionname}
  \theoremstyle{plain}
  \newtheorem{cor}{\protect\corollaryname}
  \providecommand{\definitionname}{Definition}
  \providecommand{\propositionname}{Proposition}
\providecommand{\corollaryname}{Corollary}
\let\originalleft\left
\let\originalright\right
\renewcommand{\left}{\mathopen{}\mathclose\bgroup\originalleft}
\renewcommand{\right}{\aftergroup\egroup\originalright}
  \providecommand{\definitionname}{Definition}
  \providecommand{\lemmaname}{Lemma}
  \providecommand{\propositionname}{Proposition}
\providecommand{\corollaryname}{Corollary}
\begin{document}

\title{On scalable monoids}

\author{Dan Jonsson}

\address{University of Gothenburg, SE-405 30 Gothenburg, Sweden}

\email{dan.jonsson@gu.se}
\begin{abstract}
This brief exposition presents some basic properties of scalable monoids
and quantity spaces, introduced in \cite{key-4}. 
\end{abstract}

\maketitle

\section{Introduction}

Let $X$ be a non-empty set and $R$ a commutative unital ring. An
associative $R$-algebra with $X$ as carrier set combines three operations
on $X$: 
\begin{lyxlist}{00.00.0000}
\item [{(1)}] \emph{addition} of elements of $X$, a binary operation $+:X\times X\rightarrow X$,
written $\left(x,y\right)\mapsto x+y$, such that $\left(X,+\right)$
is an abelian group; 
\item [{(2)}] \emph{multiplication} of elements of $X$, a binary operation
$\cdot:X\times X\rightarrow X$, written $\left(x,y\right)\mapsto x\cdot y$
or $\left(x,y\right)\mapsto xy$, such that $\left(X,\cdot\right)$
is a monoid; 
\item [{(3)}] \emph{scalar multiplication} of elements of $X$ by elements
of $R,$ a monoid action $R\times X\rightarrow X$, written $\left(\alpha,x\right)\mapsto\alpha x$,
where the multiplicative monoid of $R$ acts on $X$, so that $1x=x$
and $\alpha\left(\beta x\right)=\left(\alpha\beta\right)x$. 
\end{lyxlist}
These structures are linked pairwise: 
\begin{lyxlist}{00.00.0000}
\item [{(a)}] addition and multiplication are linked by the distributive
laws $x\left(y+z\right)=xy+xz$ and $\left(x+y\right)z=xz+yz$; one
of these laws suffices when multiplication in $X$ is commutative; 
\item [{(b)}] addition and scalar multiplication are linked by the distributive
laws $\alpha\left(x+y\right)=\alpha x+\alpha y$ and $\left(\alpha+\beta\right)x=\alpha x+\beta x$; 
\item [{(c)}] multiplication and scalar multiplication are linked by the
laws $\alpha\left(xy\right)=\left(\alpha x\right)y$ and $\alpha\left(xy\right)=x\left(\alpha y\right)$;
one of these laws suffices when multiplication in $X$ is commutative. 
\end{lyxlist}
Related algebraic systems can be obtained from associative $R$-algebras
by removing one of the three operations and hence the links between
the removed operation and the two others. Two cases are very familiar.
A unital\emph{ ring} has only addition and multiplication of elements,
linked as described in (a). A\emph{ module} has only addition and
scalar multiplication of elements, linked as described in (b). The
question arises whether it makes sense to define an ``algebra without
an additive group'', with only multiplication and scalar multiplication,
linked as described in (c). Would such a notion be of mathematical
interest and useful for applications? Some reasons to give an affirmative
answer to that question are found in this article.

\section{Basic notions}

It is tempting to call an ``algebra without an additive group''
an ``alebra'', but that may sound somewhat frivolous, and considering
the nature of this algebraic structure, ``scalable monoid'' \cite{key-4},
abbreviated ``scaloid'', might be a better name. This structure
can be defined more or less broadly. In the definition below, multiplication
in $R$ is assumed to be commutative, but multiplication in $X$ is
not. 
\begin{defn}
\label{thm:def1}A \emph{scalable monoid}, or \emph{scaloid,} is an
algebraic structure incorporating structures described by (2), (3)
and (c) above.

More explicitly, a scalable monoid $X$ over a commutative unital
ring $R$ is a monoid $X$ such that there is a function 
\[
\sigma:R\times X\rightarrow X,\qquad\left(\alpha,x\right)\mapsto\alpha x,
\]
called \emph{scalar multiplication}, such that for any $\alpha,\beta\in R$
and any $x,y\in X$ we have 
\begin{enumerate}
\item $1x=x$, 
\item $\alpha\left(\beta x\right)=\left(\alpha\beta\right)x$, 
\item $\alpha\left(xy\right)=\left(\alpha x\right)y=x\left(\alpha y\right)$. 
\end{enumerate}
For clarity, we may denote the unit element of the monoid $X$ by
$1\!_{X}$ or $\mathbf{1}$ and the unit element of the ring $R$
by $1\!_{R}$. $\hphantom{\square}\square$ 
\end{defn}
An \emph{invertible} element of a scaloid $X$ is an element $x\in X$
which has a (necessarily unique) \emph{inverse} $x^{-1}\in X$ such
that $xx^{-1}=x^{-1}x=1\!_{X}$. We can define positive powers of
$x$, denoted $x^{k}$, in the usual way; if $x$ is invertible, negative
powers of $x$ can be defined by setting $x^{-k}=\left(x^{-1}\right)^{k}$.
By convention, $x^{0}=1\!_{X}$. By associativity, $x^{k}x^{\ell}=x^{\left(k+\ell\right)}$.
A product of invertible elements is clearly an invertible element.

The following two simple facts about scaloids will be used repeatedly: 
\begin{lem}
\label{thm:lem1}Let $X$ be a scalable monoid over $R$. 
\begin{enumerate}
\item $\left(\alpha x\right)\left(\beta y\right)=\left(\alpha\beta\right)\left(xy\right)$
for any $x,y\in X$ and $\alpha,\beta\in R$. 
\item $\alpha\left(\beta x\right)=\beta\left(\alpha x\right)$ and $\left(\alpha\beta\right)x=\left(\beta\alpha\right)x$
for any $x\in X$ and $\alpha,\beta\in R$. 
\end{enumerate}
\end{lem}
\begin{proof}
(1). $\left(\alpha x\right)\left(\beta y\right)=\alpha\left(x\left(\beta y\right)\right)=\alpha\left(\beta\left(xy\right)\right)=\left(\alpha\beta\right)\left(xy\right)$.
\\
 (2). $\alpha\left(\beta x\right)=\alpha\left(\beta\left(\mathbf{1}x\right)\right)=\alpha\left(\left(\beta\mathbf{1}\right)x\right)=\left(\beta\mathbf{1}\right)\left(\alpha x\right)=\beta\left(\mathbf{1}\left(\alpha x\right)\right)=\beta\left(\alpha\left(\mathbf{1}x\right)\right)=\beta\left(\alpha x\right)$,
so $\left(\alpha\beta\right)x=\left(\beta\alpha\right)x$.$\qedhere$
\end{proof}
Note that the assumption that $R$ is commutative is not used in the
proofs given in Section 3, only the fact that $\alpha\beta$ and $\beta\alpha$
act in the same way on $X$, as stated in Lemma \ref{thm:lem1} (2).
The assumption that $\alpha\beta=\beta\alpha$ in $R$ was also not
used in the proof of Lemma \ref{thm:lem1} (2). 

\section{Ideals and additive group operations}

By removing the additive group of an $R$-algebra, we remove the three
group operations $\left(x,y\right)\mapsto x+y$, $\left(x\right)\mapsto-x$
and $\left(\right)\mapsto0$ (addition, taking the inverse and selecting
the group identity). It turns out, however, that under natural assumptions
the group structure can be partially recovered. Specifically, a scalable
monoid over $R$ can be partitioned into equivalence classes in each
of which a group structure corresponding to the additive group structure
in $R$ can often be defined.

\subsection{Commensurability and ideals}
\begin{defn}
\label{d3.1}Let $X$ be a scalable monoid over $R$, and let $\sim$
be a relation on $X$ such that $x\sim y$ if and only if $\alpha x=\beta y$
for some $\alpha,\beta\in R.$ Then $x$ and $y$ are said to be \emph{commensurable},
and the relation $\sim$ is called \emph{commensurability}. $\hphantom{\square}\square$ 
\end{defn}
As a trivial consequence of this definition, $x\sim\lambda x$ for
any $x\in X$ and $\lambda\in R$, since $\lambda x=1\left(\lambda x\right)$.
\newpage{}
\begin{prop}
\label{s3.1}Commensurability is an equivalence relation.
\end{prop}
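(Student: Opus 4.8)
The plan is to verify the three defining properties of an equivalence relation---reflexivity, symmetry, and transitivity---in turn. The first two fall straight out of the definition, and the only real work is in transitivity, so I would dispatch the easy cases quickly and then concentrate on the combining argument.

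For reflexivity, taking $\alpha=\beta=1\!_{R}$ gives $1\!_{R}\,x=1\!_{R}\,x$, which by axiom (1) is just $x=x$; hence $x\sim x$ for every $x\in X$. Symmetry is equally immediate: if $\alpha x=\beta y$ witnesses $x\sim y$, then reading the very same equation as $\beta y=\alpha x$ witnesses $y\sim x$, since the definition is symmetric in the two scalars.

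The substantive step, and the one I expect to be the main obstacle, is transitivity. Suppose $x\sim y$ and $y\sim z$, so that $\alpha x=\beta y$ and $\gamma y=\delta z$ for suitable $\alpha,\beta,\gamma,\delta\in R$. The difficulty is that the middle element $y$ appears scaled by $\beta$ in one equation and by $\gamma$ in the other, so the two relations do not chain together directly. The idea is to eliminate $y$ by scaling the first equation by $\gamma$ and then rearranging the scalars so that the factor $\gamma y$ appearing in the second equation is produced. Concretely, I would compute
\[
(\gamma\alpha)x=\gamma(\alpha x)=\gamma(\beta y)=\beta(\gamma y)=\beta(\delta z)=(\beta\delta)z,
\]
where the first and last equalities use axiom (2), the second uses $\alpha x=\beta y$, the fourth uses $\gamma y=\delta z$, and the crucial middle step $\gamma(\beta y)=\beta(\gamma y)$ is exactly part (2) of the Claim. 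This yields scalars $\mu=\gamma\alpha$ and $\nu=\beta\delta$ with $\mu x=\nu z$, so $x\sim z$.

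The key observation is therefore that the whole argument reduces to swapping the order in which two scalars act on an element, which is precisely the fact isolated in the Claim; this is why that lemma was flagged as being used repeatedly. As the Remark notes, only the interchangeability of the scalar actions (not full commutativity of $R$) is needed, so I would expect the proof to go through verbatim under that weaker hypothesis as well.
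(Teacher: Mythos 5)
Your proof is correct and follows essentially the same route as the paper: reflexivity and symmetry read off the definition, and transitivity established by scaling the two witnessing equations and invoking the interchange $\gamma\left(\beta y\right)=\beta\left(\gamma y\right)$ from part (2) of the Claim to chain them into $\left(\gamma\alpha\right)x=\left(\beta\delta\right)z$. Your closing observation about only needing the interchangeability of scalar actions matches the paper's own Remark.
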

\begin{proof}
The commensurability relation $\sim$ is reflexive because $1x=1x$,
it is symmetric because if $\alpha x=\beta y$ then $\beta y=\alpha x$,
and it is transitive because if $\alpha x=\beta y$ and $\gamma y=\delta z$
then $\gamma\left(\alpha x\right)=\gamma\left(\beta y\right)$ and
$\beta\left(\gamma y\right)=\beta\left(\delta z\right)$, so $\left(\gamma\alpha\right)x=\left(\beta\delta\right)z$
since $\gamma\left(\beta y\right)=\beta\left(\gamma y\right)$.$\qedhere$ 
\end{proof}
\begin{defn}
\label{d3.2}Let $X$ be a scalable monoid. An\emph{ ideal} in $X$
is an equivalence class with respect to commensurability. We denote
the ideal containing $x\in X$ by $\left[x\right]$. $\hphantom{\square}\square$
\end{defn}
\begin{prop}
\label{s3.2}Let $X$ be a scalable monoid. The commensurability relation
$\sim$ is a congruence relation on $X$, so the corresponding set
of equivalence classes can be made into an algebraic structure $X/{\sim}$
with a binary operation $\left(\left[x\right],\left[y\right]\right)\mapsto\left[x\right]\left[y\right]$
defined by 
\[
[x][y]=[xy]
\]
for any $x,y\in X$. $X/{\sim}$ is a monoid with $\left[\mathbf{1}\right]$
as unit element.
\end{prop}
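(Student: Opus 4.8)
The plan is to establish the two things the statement packages together: first that $\sim$ is a congruence for the monoid multiplication, and then that the induced operation makes $X/{\sim}$ a monoid. Since Proposition~\ref{s3.1} already gives that $\sim$ is an equivalence relation, the only substantive point is \emph{compatibility}: I must show that $x\sim x'$ and $y\sim y'$ together imply $xy\sim x'y'$. Everything else is a routine transfer of the monoid axioms from $X$ to the quotient.

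For compatibility, suppose $x\sim x'$ and $y\sim y'$. By Definition~\ref{d3.1} there are $\alpha,\beta,\gamma,\delta\in R$ with $\alpha x=\beta x'$ and $\gamma y=\delta y'$. The idea is to multiply these two equalities together in $X$ and then pull the scalars out in front using part~(1) of the Claim. Concretely, $\left(\alpha x\right)\left(\gamma y\right)=\left(\beta x'\right)\left(\delta y'\right)$, and rewriting each side by $\left(\alpha x\right)\left(\gamma y\right)=\left(\alpha\gamma\right)\left(xy\right)$ and $\left(\beta x'\right)\left(\delta y'\right)=\left(\beta\delta\right)\left(x'y'\right)$ turns this into $\left(\alpha\gamma\right)\left(xy\right)=\left(\beta\delta\right)\left(x'y'\right)$. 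This is precisely a witness that $xy\sim x'y'$, with the scalars $\alpha\gamma$ and $\beta\delta$. This is the only step where anything genuinely has to be checked, so I expect it to be the crux of the argument; fortunately the Claim does the heavy lifting, and the commutativity subtleties flagged in the Remark play no role here.

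Given compatibility, the operation $[x][y]=[xy]$ is well defined on realms, since replacing $x$ and $y$ by commensurable representatives leaves the class $[xy]$ unchanged. It then remains to verify the monoid axioms, which descend directly from those of $X$. Associativity follows from $\left([x][y]\right)[z]=[(xy)z]=[x(yz)]=[x]\left([y][z]\right)$ using associativity in $X$, and the unit law follows from $[\mathbf{1}][x]=[\mathbf{1}x]=[x]$ and $[x][\mathbf{1}]=[x\mathbf{1}]=[x]$, so that $[\mathbf{1}]$ is a two-sided identity. Hence $X/{\sim}$ is a monoid with unit $[\mathbf{1}]$, as claimed.
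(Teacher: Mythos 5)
Your proposal is correct and follows essentially the same route as the paper: the key step in both is multiplying the two witnessing equations $\alpha x=\beta x'$ and $\gamma y=\delta y'$ and using part (1) of the Claim to get $(\alpha\gamma)(xy)=(\beta\delta)(x'y')$, which witnesses $xy\sim x'y'$. The only cosmetic difference is that you check associativity and the unit laws in the quotient directly, while the paper packages the same verification as the statement that $x\mapsto[x]$ is a surjective monoid homomorphism.
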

\begin{proof}
If $\alpha x=\alpha'x'$ and $\beta y=\beta'y'$ then $(\alpha x)(\beta y)=(\alpha'x')(\beta'y')$,
so $(\alpha\beta)(xy)=(\alpha'\beta')(x'y')$. Thus, if $x\sim x'$
and $y\sim y'$ then $xy\sim x'y'$, so the product in $X/{\sim}$
given by $[x][y]=[xy]$ is well-defined. Also, for any $x\in X,$
$\mathbf{1}x=x\mathbf{1}=x$, so $\left[\mathbf{1}\right]\left[x\right]=\left[x\right]\left[\mathbf{1}\right]=\left[x\right]$,
so $\left[\mathbf{1}\right]$ is a unit element for $X/{\sim}$.$\qedhere$
\end{proof}
$X/{\sim}$ is the image of $X$ under the mapping $h:x\mapsto\left[x\right]$,
and it is clear that $h$ is a homomorphism of monoids. 

Note that a scaloid is akin to a tensor product of modules. Recall
that the tensor product of $u\in U$ and $v\in V$ is a tensor $u\otimes v$
belonging to $U\otimes V$; this is a module in general distinct from
$U$ and $V$. Similarly, the product $xy$ of $x$ and $y$ in a
scaloid $X$ does not in general belong to the same ideal as $x$
or $y$; in general $\left[x\right],\left[y\right]\neq\left[xy\right]$.
This suggests an application of scaloids: ``quantities'' such as
$1\,\mathsf{N}$, $2\,\mathsf{m}$ or $2\,\mathsf{Nm}$ can be regarded
as elements of a scaloid, specifically quantities with different ``sorts''
or ``dimensions'', corresponding to different equivalence classes
of such quantities, namely ideals.

Pursuing the analogy with tensor products, certain additional assumptions
allow us to regard each ideal as a module, making it possible to add
and subtract quantities as described below.

\subsection{Recovering additive group operations}
\begin{defn}
Let $X$ be a scalable monoid over $R$, and consider an ideal $\mathfrak{D}\subset X$.
A \emph{pivot} for $\mathfrak{D}$ is an element $p\in\mathfrak{D}$
such that for every $x\in\mathfrak{D}$ there is a unique $\lambda\in R$
such that $x=\lambda p$. $\hphantom{\square}\square$
\end{defn}
\begin{prop}
If $p$ and $p'$ are pivots for $\mathfrak{D}$, so that $x=\alpha p=\alpha'p'$
and $y=\beta p=\beta'p'$ for any $x,y\in\mathfrak{D}$, then $\left(\alpha+\beta\right)p=\left(\alpha'+\beta'\right)p'$. 
\end{prop}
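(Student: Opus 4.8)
The plan is to relate the two pivots by a single scalar and then reduce everything to distributivity in $R$. Since $p'$ belongs to $\mathfrak{R}$ and $p$ is a pivot for $\mathfrak{R}$, there is a unique $c\in R$ with $p'=cp$. The strategy is to express the coefficients $\alpha',\beta'$ (relative to $p'$) in terms of $\alpha,\beta$ (relative to $p$) through this single scalar $c$, and then invoke the distributive law in $R$.

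First I would show $\alpha=\alpha'c$ and $\beta=\beta'c$. Starting from $x=\alpha p=\alpha'p'$ and substituting $p'=cp$, axiom (2) of scalar multiplication gives $\alpha'p'=\alpha'(cp)=(\alpha'c)p$, so $\alpha p=(\alpha'c)p$. Uniqueness of the representation of $x$ with respect to the pivot $p$ then forces $\alpha=\alpha'c$. The identical argument applied to $y$ yields $\beta=\beta'c$.

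Finally I would compute both sides of the claimed equality. On the one hand, $(\alpha+\beta)p=(\alpha'c+\beta'c)p=((\alpha'+\beta')c)p$ by distributivity in $R$. On the other hand, $(\alpha'+\beta')p'=(\alpha'+\beta')(cp)=((\alpha'+\beta')c)p$, again by axiom (2). The two right-hand sides coincide, giving $(\alpha+\beta)p=(\alpha'+\beta')p'$ as required.

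The argument is essentially mechanical; the only genuine content is the appeal to uniqueness in the definition of a pivot, which is what converts the equality of \emph{elements} $\alpha p=(\alpha'c)p$ into the equality of \emph{scalars} $\alpha=\alpha'c$. Without uniqueness this step would fail, so that is the point I would handle with care. Note also that only right distributivity in $R$ is used, consistent with the remark that full commutativity of $R$ is not needed.
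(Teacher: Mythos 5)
Your proof is correct and follows essentially the same route as the paper's: relate the two pivots by the unique scalar $c$ (the paper's $\lambda$) with $p'=cp$, use the uniqueness clause in the definition of a pivot to deduce $\alpha=\alpha'c$ and $\beta=\beta'c$, and conclude by distributivity in $R$. Your explicit emphasis on where uniqueness is needed is a welcome clarification of a step the paper leaves implicit, but the argument is the same.
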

\begin{proof}
As $p'\in\mathfrak{R}$, there is a unique $\lambda\in R$ such that
$p'=\lambda p$. Thus, $\left(\alpha'+\beta'\right)p'=\left(\alpha'+\beta'\right)\left(\lambda p\right)=\left(\left(\alpha'+\beta'\right)\lambda\right)p=\left(\alpha'\lambda+\beta'\lambda\right)p$.
Also, $x=\alpha p=\alpha'p'=\alpha'\left(\lambda p\right)$ implies
$\alpha=\alpha'\lambda$, and $y=\beta p=\beta'p'=\beta'\left(\lambda p\right)$
implies $\beta=\beta'\lambda$.$\qedhere$ 
\end{proof}
Hence, the sum of two elements of a scalable monoid can be defined
as follows: 
\begin{defn}
\label{d3.3}Let $X$ be a scalable monoid over $R$, and let $p$
be a pivot for an ideal $\mathfrak{D}$. If $x=\alpha p$ and $y=\beta p$,
we set 
\[
x+y=\left(\alpha+\beta\right)p.\hphantom{\square}\square
\]
\end{defn}
Note that if $x=\alpha p$ and $y=\beta p$ then $\beta x=\beta\left(\alpha p\right)=\alpha\left(\beta p\right)=\alpha y$,
so $x+y$ can be defined \emph{only if} $x\sim y$ \textendash{} one
cannot add incommensurable quantities.

As a trivial consequence of Definition \ref{d3.3}, addition in an
ideal is commutative.

If $p$ is a pivot for an ideal $\mathfrak{D}$ and $q\in\mathfrak{D}$
then $0q=0\left(\lambda p\right)=\left(0\lambda\right)p=0p$, so there
is a unique element $\mathbf{0}_{\mathfrak{D}}\in\mathfrak{D}$ such
that $\mathbf{0}_{\mathfrak{D}}=0q$ for any $q\in\mathfrak{D}$,
defined by 
\[
\mathbf{0}_{\mathfrak{D}}=0p
\]
for any pivot $p$ for $\mathfrak{D}$. It follows immediately from
Definition \ref{d3.3} that $x+\mathbf{0}_{\mathfrak{D}}=\mathbf{0}_{\mathfrak{D}}+x=x$
for any $x\in\mathfrak{D}$. 

A unital ring $R$ has a unique additive inverse $-1$ of $1_{R}$,
and we set 
\[
-x=\left(-1\right)x
\]
for all $x\in X$. If $\mathfrak{D}$ has a pivot, we also set $x-y=x+\left(-y\right)$
for any $x,y\in\mathfrak{D}$, so that $x-x=-x+x=\mathbf{0}_{\mathfrak{D}}$
for any $x\in\mathfrak{D}$.

\section{Free commutative scalable monoids}

A commutative scalable monoid $X$ is one where $xy=yx$ for all $x,y\in X$.
In this section, only commutative scalable monoids will be considered. 
\begin{defn}
\label{d2.3}Let $X$ be a commutative scalable monoid over $R$.
A (finite) \emph{basis} for $X$ is a set $B=\left\{ b_{1},\ldots,b_{n}\right\} $
of invertible elements of $X$ such that every $x\in X$ has a unique
(up to order of factors) expansion 
\[
x=\mu\prod_{i=1}^{n}b_{i}^{_{_{k_{i}}}},
\]
where $\mu\in R$ and $k_{1},\dots,k_{n}$ are integers. $\hphantom{\square}\square$ 
\end{defn}
Any product of invertible quantities is invertible, so any product
of basis elements is invertible. As $X$ is commutative, we have
\[
\left(\mu\prod_{i=1}^{n}b_{i}^{_{_{k_{i}}}}\right)\left(\nu\prod_{i=1}^{n}b_{i}^{\ell_{i}}\right)=\left(\mu\nu\right)\prod_{i=1}^{n}b_{i}^{_{\left(k_{i}+\ell_{i}\right)}}.
\]

\subsection{Quantity spaces}

Below, only scaloids over a field $K$ will be considered. 
\begin{defn}
\label{d2.4}A (finite-rank) \emph{free commutative scalable monoid
over a field} $K$, or \emph{a quantity space} over $K$, is a commutative
scalable monoid over $K$ which has a (finite) basis. $\hphantom{\square}\square$

With a view to applications, elements of a quantity space may be called
\emph{quantities,} and ideals in a quantity space may be called \emph{dimensions}.
As explained in \cite{key-4}, a basis for a quantity space can be
interpreted as a \emph{system of fundamental units of measurement}.
\end{defn}
\begin{lem}
\label{s3.3}Let $X$ be a quantity space over $K$ with a basis $\left\{ b_{1},\ldots b_{n}\right\} $,
and consider $x=\mu\prod_{i=1}^{n}b_{i}^{_{_{k_{i}}}}$ and $y=\nu\prod_{i=1}^{n}b_{i}^{_{_{\ell_{i}}}}$.
The following conditions are equivalent: 
\begin{enumerate}
\item $x\sim y.$ 
\item $k^{i}=\ell{}^{i}$ for $i=1,\ldots,n$. 
\item $\nu x=\mu y$, or equivalently $\mu_{B}\left(y\right)x=\mu_{B}\left(x\right)y$. 
\end{enumerate}
\end{lem}
\begin{proof}
$(1)\Rightarrow(2)$. If $x\sim y$ then $\left(\alpha\mu\right)\prod_{i=1}^{n}b_{i}^{_{_{k_{i}}}}=z=\left(\beta\nu\right)\prod_{i=1}^{n}b_{i}^{_{_{\ell_{i}}}}$
for some \linebreak{}
 $\alpha,\beta\in K$. As the expansion of $z$ is unique, $k^{i}=\ell{}^{i}$
for $i=1,\ldots,n$.

The implications $(2)\Rightarrow(3)$ and $(3)\Rightarrow(1)$ are
trivial.$\qedhere$ 
\end{proof}

\subsection{The measure of a quantity}
\begin{defn}
\label{d2.5}Let $X$ be a quantity space over $K$, and let $B=\left\{ b_{1},\ldots,b_{n}\right\} $
be a basis for $X$. The uniquely determined scalar $\mu\in K$ in
the expansion 
\[
x=\mu\prod_{i=1}^{n}b_{i}^{k_{i}}
\]
is called the \emph{measure} of $x$ relative to \textbf{$B$} and
will be denoted by $\mu_{B}\left(x\right)$. $\hphantom{\square}\square$ 
\end{defn}
For example, $\mu_{B}\left(\mathbf{1}\right)=1$ for any $B$, because
$\mathbf{1}=1\cdot\mathbf{1}=1\prod_{i=1}^{n}b_{i}^{0}$ for any $B$.

Relative to a fixed basis, measures of quantities can be used as proxies
for the quantities themselves. For example, the measure of a product
of quantities is equal to the product of the measures of these quantities. 
\begin{prop}
\label{s2.1}Let $B=\left\{ b_{1},\ldots,b_{n}\right\} $ be a basis
for a quantity space $X$ over $K$. 
\begin{enumerate}
\item For any $x\in X$ and $\lambda\in K$, $\mu_{B}\left(\lambda x\right)=\lambda\mu_{B}\left(x\right)$. 
\item For any $x,y\in X$, $\mu_{B}\left(xy\right)=\mu_{B}\left(x\right)\mu_{B}\left(y\right)$. 
\end{enumerate}
\end{prop}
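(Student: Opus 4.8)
The plan is to establish both identities by expanding the quantities in the basis $B$, rewriting the resulting expressions in the canonical form (a single scalar times a product of integer powers of the $b_i$), and then reading off the measure via the uniqueness clause of Definition~\ref{d2.5}.

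For part~(1), I would begin with the expansion $x=\mu_B(x)\prod_{i=1}^n b_i^{k_i}$ supplied by Definition~\ref{d2.4}. Scaling by $\lambda$ and applying property~(2) of a scalable monoid, $\alpha(\beta x)=(\alpha\beta)x$, I would obtain $\lambda x=\lambda\bigl(\mu_B(x)\prod_{i=1}^n b_i^{k_i}\bigr)=\bigl(\lambda\mu_B(x)\bigr)\prod_{i=1}^n b_i^{k_i}$. The integer exponents are unchanged, so this is again a basis expansion of $\lambda x$; by uniqueness its scalar factor $\lambda\mu_B(x)$ must equal $\mu_B(\lambda x)$.

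For part~(2), I would write $x=\mu_B(x)\prod_{i=1}^n b_i^{k_i}$ and $y=\mu_B(y)\prod_{i=1}^n b_i^{\ell_i}$ and invoke the product formula displayed just after Definition~\ref{d2.3}, which gives $xy=\bigl(\mu_B(x)\mu_B(y)\bigr)\prod_{i=1}^n b_i^{k_i+\ell_i}$. Since each $k_i+\ell_i$ is again an integer, this is a legitimate basis expansion of $xy$, and uniqueness identifies its scalar factor $\mu_B(x)\mu_B(y)$ with $\mu_B(xy)$.

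I do not anticipate a genuine obstacle: both parts reduce to coercing an expansion into canonical form and appealing to uniqueness. The only point requiring care is confirming that after each manipulation one truly has a well-formed basis expansion, a single scalar times a product of integer powers of the $b_i$, so that Definition~\ref{d2.5} applies verbatim and the measure can be read off directly. Property~(2) in Definition~1 and the commutative product formula are precisely what guarantee this in the two respective cases.
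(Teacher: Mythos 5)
Your proposal is correct and follows essentially the same route as the paper: expand in the basis, use $\alpha(\beta x)=(\alpha\beta)x$ for part (1) and the displayed product formula for part (2), then read off the measure from the unique expansion. Your explicit attention to verifying that the resulting expression is a well-formed canonical expansion is a slight elaboration of what the paper leaves implicit, but the argument is identical in substance.
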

\begin{proof}
We have $x=\mu\prod_{i=1}^{n}b{}_{i}^{k_{i}}$ and $y=\nu\prod_{i=1}^{n}b_{i}^{\ell_{i}}$,
where $\mu,\nu\in K$. \\
 (1). $\lambda x=\lambda\left(\mu\prod_{i=1}^{n}b{}_{i}^{k_{i}}\right)=\left(\lambda\mu\right)\prod_{i=1}^{n}b{}_{i}^{k_{i}}$,
so $\mu_{B}\left(\lambda x\right)=\lambda\mu=\lambda\mu_{B}\left(x\right)$.
\\
 (2). $\left(\mu\prod_{i=1}^{n}b_{i}^{_{_{k_{i}}}}\right)\left(\nu\prod_{i=1}^{n}b_{i}^{\ell_{i}}\right)=\left(\mu\nu\right)\prod_{i=1}^{n}b_{i}^{_{\left(k_{i}+\ell_{i}\right)}}$,
so $\mu_{B}\left(xy\right)=\mu\nu=$$\mu_{B}\left(x\right)\mu_{B}\left(y\right)$.
$\qedhere$ 
\end{proof}
\begin{prop}
\label{s2.6}A quantity $x\in X$ is invertible if and only if $\mu_{B}\left(x\right)\neq0$,
and for any invertible $x\in X$ we have $\mu_{B}\left(x^{-1}\right)=1/\mu_{B}\left(x\right)$.
\end{prop}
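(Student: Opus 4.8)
The plan is to reduce everything to the multiplicative property of the measure established in Proposition \ref{s2.1}, together with the fact that $K$ is a field and that every basis element is invertible. Two of the three assertions (the necessity of $\mu_B(x) \neq 0$ and the formula for $\mu_B(x^{-1})$) will fall out of a single application of multiplicativity, while the sufficiency of $\mu_B(x) \neq 0$ will be handled by writing down an explicit inverse.

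First I would dispatch the ``only if'' direction and the formula simultaneously. Suppose $x$ is invertible, so that $x\,x^{-1} = \mathbf{1}$. Applying $\mu_B$ and using Proposition \ref{s2.1}(2) together with $\mu_B(\mathbf{1}) = 1$ yields $\mu_B(x)\,\mu_B(x^{-1}) = 1$. Since $1 \neq 0$ in the field $K$, neither factor can vanish, so $\mu_B(x) \neq 0$; and rearranging the same identity gives $\mu_B(x^{-1}) = \mu_B(x)^{-1} = 1/\mu_B(x)$. Thus both the necessity and the displayed formula come for free.

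For the ``if'' direction I would argue constructively. Writing the expansion $x = \mu \prod_{i=1}^{n} b_i^{k_i}$ with $\mu = \mu_B(x) \neq 0$, I use that $K$ is a field to form $\mu^{-1}$ and that each $b_i$ is invertible to form the candidate $y = \mu^{-1}\prod_{i=1}^{n} b_i^{-k_i}$. The product formula for expansions recorded in Section 4 (and reused in the proof of Proposition \ref{s2.1}) then gives $xy = (\mu\mu^{-1})\prod_{i=1}^{n} b_i^{\,k_i - k_i} = 1\cdot\mathbf{1} = \mathbf{1}$, and commutativity of $X$ gives $yx = \mathbf{1}$ as well, so $x$ is invertible (indeed $x^{-1} = y$).

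I do not anticipate a serious obstacle: the entire content is carried by the homomorphism property of $\mu_B$ and the field hypothesis on $K$. The only point meriting a moment's care is that the sufficiency direction genuinely requires an \emph{explicit} inverse rather than an abstract existence argument, so the candidate $y$ must be verified to be a two-sided inverse; this, however, is immediate once commutativity is invoked.
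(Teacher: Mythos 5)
Your proposal is correct and follows essentially the same route as the paper: the necessity direction and the formula for $\mu_{B}\!\left(x^{-1}\right)$ are obtained by applying the multiplicativity of $\mu_{B}$ to $xx^{-1}=\mathbf{1}$, and the sufficiency direction is handled by exhibiting the explicit inverse $\mu_{B}\!\left(x\right)^{-1}\prod_{i=1}^{n}b_{i}^{-k_{i}}$. The paper's proof is the same argument in the opposite order.
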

\begin{proof}
If $\mu_{B}\left(x\right)\neq0$ and $x=\mu_{B}\left(x\right)\prod_{i=1}^{n}b_{i}^{k_{i}}$
then 
\[
\frac{\prod_{i=1}^{n}b_{i}^{-k_{i}}}{\mu_{B}\left(x\right)}x=x\frac{\prod_{i=1}^{n}b_{i}^{-k_{i}}}{\mu_{B}\left(x\right)}=\left(\mu_{B}\left(x\right)\prod_{i=1}^{n}b_{i}^{k_{i}}\right)\frac{\prod_{i=1}^{n}b_{i}^{-k_{i}}}{\mu_{B}\left(x\right)}=\mathbf{1},
\]
so $x$ is invertible. If, conversely, $x$ has an inverse $x^{-1}$
then $\mu_{B}\left(x\right)\mu_{B}\left(x^{-1}\right)=\mu_{B}\left(xx^{-1}\right)=\mu_{B}\left(\mathbf{1}\right)=1$,
so $\mu_{B}\left(x\right)\neq0$ and $\mu_{B}\left(x^{-1}\right)=1/\mu_{B}\left(x\right)$.$\qedhere$
\end{proof}
\begin{prop}
\label{s3.4}Let $X$ be a quantity space over $K.$ For every $x\in\left[\mathbf{1}\right]$,
$\mu_{B}\left(x\right)$ does not depend on $B$.
\end{prop}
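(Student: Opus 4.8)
The plan is to reduce $\mu_B(x)$ to a basis-free description of $x$. First I would note that membership in the realm $[\mathbf{1}]$ is defined through commensurability (Definition \ref{d3.1}), which makes no reference to any basis; hence $[\mathbf{1}]$ is the same subset of $X$ for every choice of $B$. The essential step is then to show that for $x\in[\mathbf{1}]$ and any basis $B=\{b_1,\ldots,b_n\}$, the expansion of $x$ collapses to $x=\mu_B(x)\,\mathbf{1}$. To see this, I would write $x=\mu\prod_{i=1}^n b_i^{k_i}$ and record that $\mathbf{1}=1\cdot\prod_{i=1}^n b_i^{0}$. Since $x\sim\mathbf{1}$, the equivalence $(1)\Leftrightarrow(2)$ in Claim \ref{s3.3} forces $k_i=0$ for every $i$, so $x=\mu\,\mathbf{1}$ with $\mu=\mu_B(x)$.

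Once this representation is in hand, the proposition follows from the uniqueness of the scalar attached to $\mathbf{1}$. Given two bases $B$ and $B'$, the previous step yields $\mu_B(x)\,\mathbf{1}=x=\mu_{B'}(x)\,\mathbf{1}$. Applying $\mu_B$ to both sides and using Proposition \ref{s2.1}(1) together with $\mu_B(\mathbf{1})=1$, I would obtain $\mu_B(x)=\mu_B\!\left(\mu_B(x)\,\mathbf{1}\right)=\mu_B\!\left(\mu_{B'}(x)\,\mathbf{1}\right)=\mu_{B'}(x)$, which is exactly the asserted independence. Equivalently, the map $\lambda\mapsto\lambda\mathbf{1}$ is injective, since composing it with $\mu_B$ gives the identity on $K$; thus the scalar $\lambda$ with $x=\lambda\mathbf{1}$ is determined by $x$ alone, and every $\mu_B(x)$ must equal this $\lambda$.

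I do not expect a serious obstacle here: the content lies almost entirely in recognizing the correct basis-free characterization of the elements of $[\mathbf{1}]$, namely that they are precisely the scalar multiples of $\mathbf{1}$. The one point that warrants care is the logical order, since Claim \ref{s3.3} is stated relative to a fixed basis; one must first invoke the basis-independence of the realm $[\mathbf{1}]$ before comparing the expansions of $x$ in $B$ and in $B'$. After that, the argument is a short computation resting on Proposition \ref{s2.1}(1) and the normalization $\mu_B(\mathbf{1})=1$.
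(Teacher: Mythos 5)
Your proof is correct and follows essentially the same route as the paper: both arguments use Claim \ref{s3.3} to force all exponents in the expansion of $x$ to vanish, yielding $x=\mu_{B}(x)\cdot\mathbf{1}=\mu_{B'}(x)\cdot\mathbf{1}$, and then conclude equality of the scalars (the paper by uniqueness of the expansion relative to $B$, you by the equivalent device of applying $\mu_{B}$ and using Proposition \ref{s2.1}(1) with $\mu_{B}(\mathbf{1})=1$).
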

\begin{proof}
Let $B=\left\{ b_{1},\ldots,b_{n}\right\} $ and $\widehat{B}=\left\{ \widehat{b}_{1},\ldots,\widehat{b}_{m}\right\} $
be bases for $X$. The quantity $\mathbf{1}$ has the expansions $\mathbf{1}=1\prod_{i=1}^{n}b_{i}^{0}$
and $\mathbf{1}=1\prod_{i=1}^{m}\widehat{b}_{i}^{0}$. In view of
Lemma \ref{s3.3}, therefore, $x=\mu_{B}\left(x\right)\prod_{i=1}^{n}b_{i}^{0}$
and $x=\mu_{\widehat{B}}\left(x\right)\prod_{i=1}^{m}\widehat{b}_{i}^{0}=\mu_{\widehat{B}}\left(x\right)\left(1\prod_{i=1}^{m}\widehat{b}_{i}^{0}\right)=\mu_{\widehat{B}}\left(x\right)\cdot\mathbf{1}=\mu_{\widehat{B}}\left(x\right)\left(1\prod_{i=1}^{n}b_{i}^{0}\right)=\mu_{\widehat{B}}\left(x\right)\prod_{i=1}^{n}b_{i}^{0}$,
so $\mu_{B}\left(x\right)=\mu_{\widehat{B}}\left(x\right)$.$\qedhere$ 
\end{proof}
It is common to refer to a quantity $x\in\left[\mathbf{1}\right]$
as a ``dimensionless quantity'', although $x$ is not really ``dimensionless''
\textendash{} it belongs to, or ``has'', the dimension $\left[\mathbf{1}\right]$.
The fact that the measure of any $x\in\left[\mathbf{1}\right]$ does
not depend on a choice of basis \textendash{} that is, a choice of
fundamental units of measurement \textendash{} is the foundation for
dimensional analysis \cite{key-2,key-1,key-4}.

\subsection{Additive group operations in quantity spaces}
\begin{prop}
\label{s3.5-1}Let $X$ be a quantity space over $K$. 
\begin{enumerate}
\item If $x,y\in X$, $x\sim y$ and $x$ is invertible then $y=\kappa x$
for some $\kappa\in K$. 
\item \label{s2.2-1}If $x\in X$ is invertible and $\lambda x=\lambda'x$
for some $\lambda,\lambda'\in K$ then $\lambda=\lambda'$. 
\end{enumerate}
\end{prop}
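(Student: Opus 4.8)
The plan is to reduce both parts to statements about the measure map relative to the fixed basis $B$, using throughout the fact (Proposition \ref{s2.6}) that $x$ is invertible precisely when $\mu_B\!\left(x\right)\neq0$; this is exactly the hypothesis that will license division and cancellation in the field $K$.

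For part (1), I would set $\mu=\mu_B\!\left(x\right)$ and $\nu=\mu_B\!\left(y\right)$. Since $x\sim y$, Claim \ref{s3.3} furnishes the identity $\nu x=\mu y$. Invertibility of $x$ gives $\mu\neq0$ by Proposition \ref{s2.6}, so $\mu^{-1}$ exists in $K$. Multiplying the identity by $\mu^{-1}$ and applying axioms (1) and (2) of Definition 1 yields $\mu^{-1}\!\left(\mu y\right)=\left(\mu^{-1}\mu\right)y=1y=y$ on one side and $\mu^{-1}\!\left(\nu x\right)=\left(\mu^{-1}\nu\right)x$ on the other, so $y=\kappa x$ with $\kappa=\nu/\mu=\mu_B\!\left(y\right)/\mu_B\!\left(x\right)$. (Equivalently, one can bypass the claim entirely by invoking the common-exponents characterization in Claim \ref{s3.3}: $x$ and $y$ share the same $b_i^{k_i}$ factor, and dividing the scalar parts gives the same $\kappa$.)

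For part (2), I would apply the measure map to the hypothesis $\lambda x=\lambda'x$. By Proposition \ref{s2.1}(1) this gives $\lambda\,\mu_B\!\left(x\right)=\lambda'\,\mu_B\!\left(x\right)$. Once more invertibility of $x$ forces $\mu_B\!\left(x\right)\neq0$ (Proposition \ref{s2.6}), and cancellation in the field $K$ then gives $\lambda=\lambda'$.

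There is no serious obstacle here; both parts are short deductions from the already-established measure machinery. The one point demanding care is to invoke invertibility exactly where the nonvanishing of $\mu_B\!\left(x\right)$ is needed -- without it the division in part (1) and the cancellation in part (2) both fail, as one sees by taking any $x$ with $\mu_B\!\left(x\right)=0$ (e.g.\ $x=0\,b_1$), for which $\lambda x=\lambda'x$ holds for all scalars while $\lambda\neq\lambda'$, and for which part (1) cannot produce the required $\kappa$ unless $y$ also has vanishing measure.
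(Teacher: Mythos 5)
Your proposal is correct and follows essentially the same route as the paper: part (1) is identical (Claim \ref{s3.3} plus the nonvanishing of $\mu_{B}\!\left(x\right)$ from Proposition \ref{s2.6}), and part (2) merely packages the paper's direct appeal to uniqueness of the expansion through Proposition \ref{s2.1}(1), which is the same argument one abstraction level up.
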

\begin{proof}
(1). $\mu_{B}\left(y\right)x=\mu_{B}\left(x\right)y$ by Lemma \ref{s3.3},
and $\mu_{B}\left(x\right)\neq0$ by Proposition \ref{s2.6}, so 
\[
y=\frac{\mu_{B}\left(y\right)}{\mu_{B}\left(x\right)}x.
\]
(2). Let $x$ have the expansion $x=\mu\prod_{i=1}^{n}b{}_{i}^{k_{i}}$
relative to a basis $\left\{ b_{1},\ldots,b_{n}\right\} $ for $X$.
If $\lambda x=\lambda'x$ then $\lambda\mu\prod_{i=1}^{n}b_{i}^{k_{i}}=z=\lambda'\mu\prod_{i=1}^{n}b_{i}^{k_{i}}$,
so $\lambda\mu=\lambda'\mu$ since the expansion of $z$ is unique,
and $\mu\neq0$ since $x$ is invertible, so $\lambda=\lambda'$.
$\qedhere$ 
\end{proof}
Thus, every invertible quantity is a pivot for the dimension to which
it belongs. This has an important consequence: 
\begin{cor}
Every dimension in a quantity space has a pivot.
\end{cor}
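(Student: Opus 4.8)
The plan is to show that every realm contains an invertible quantity, and then to invoke the observation immediately preceding the statement, namely that every invertible quantity is a pivot for the realm to which it belongs (which itself rests on Proposition \ref{s3.5-1}). Since a realm is by definition a non-empty equivalence class, it suffices to produce, from an arbitrary representative of $\mathfrak{R}$, a commensurable element that is invertible.

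First I would fix a basis $B=\left\{b_{1},\ldots,b_{n}\right\}$ for $X$ and choose any $x\in\mathfrak{R}$, writing its unique expansion $x=\mu\prod_{i=1}^{n}b_{i}^{k_{i}}$. The natural candidate for a pivot is the ``measure-one'' representative $p:=\prod_{i=1}^{n}b_{i}^{k_{i}}$, obtained by discarding the scalar factor $\mu$. I would then verify the two properties needed of $p$. Invertibility is immediate: each $b_{i}$ is invertible by the definition of a basis, and any product of invertible quantities is invertible (equivalently, $\mu_{B}\!\left(p\right)=1\neq0$, so Proposition \ref{s2.6} applies). Commensurability $p\sim x$ follows because $x=\mu p$ and $\lambda z\sim z$ for every scalar $\lambda$; alternatively, $p$ and $x$ share the exponent vector $\left(k_{1},\ldots,k_{n}\right)$, so Claim \ref{s3.3} gives $p\sim x$. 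Hence $p\in\left[x\right]=\mathfrak{R}$.

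Since $p$ is then an invertible element lying in $\mathfrak{R}$, it is a pivot for $\mathfrak{R}$ by the sentence preceding the statement, which completes the argument. I do not anticipate a genuine obstacle here; the only point requiring a little care is confirming that the stripped-down element $p$ still belongs to the same realm as $x$, and this is exactly what Claim \ref{s3.3} (together with the remark that $x\sim\lambda x$) guarantees. It is worth noting that the argument covers the case $\mu=0$ as well, since the exponent vector, and hence the realm, is determined by the unique expansion irrespective of the value of the measure.
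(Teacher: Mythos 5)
Your proposal is correct and follows essentially the same route as the paper: the paper likewise takes an arbitrary element $\mu\prod_{i=1}^{n}b_{i}^{k_{i}}$ of the realm and replaces it by the invertible element $1\prod_{i=1}^{n}b_{i}^{k_{i}}$ in the same realm, which is a pivot by the preceding observation. You merely spell out the commensurability and invertibility checks that the paper leaves implicit, including the harmless case $\mu=0$.
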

\begin{proof}
An equivalence class is not empty, so for every dimension $\mathfrak{D}$
there is some quantity $\mu\prod_{i=1}^{n}b{}_{i}^{k_{i}}\in\mathfrak{D}$,
so there is an invertible element $1\prod_{i=1}^{n}b{}_{i}^{k_{i}}\in\mathfrak{D}$.$\qedhere$ 
\end{proof}
In a quantity space, $x+y$ is thus defined \emph{if and only if}
$x\sim y$. Addition of quantities in a dimension relates to addition
of measures in the following way: 
\begin{prop}
\label{s3.5}Let $X$ be a quantity space over $K$. For any basis
$B$ for $X$ and $x,y\in X$ such that $x\sim y$, we have $\mu_{B}\left(x\right)+\mu_{B}\left(y\right)=\mu_{B}\left(x+y\right)$.
\end{prop}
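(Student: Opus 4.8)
The plan is to reduce the claim about addition of quantities, which is defined via an arbitrary pivot, to the concrete arithmetic of measures. First I would fix a basis $B$ and recall from the Corollary that the realm $\mathfrak{R}=\left[x\right]=\left[y\right]$ containing $x$ and $y$ has a pivot $p$; indeed, any invertible element of $\mathfrak{R}$ serves, and one can take $p=\prod_{i=1}^{n}b_{i}^{k_{i}}$ where $k_{1},\dots,k_{n}$ are the common exponents guaranteed by Claim \ref{s3.3}, so that $\mu_{B}\!\left(p\right)=1$. By Definition \ref{d3.3}, writing $x=\alpha p$ and $y=\beta p$ for the unique $\alpha,\beta\in K$, we have $x+y=\left(\alpha+\beta\right)p$.

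The key step is then to translate the coefficients $\alpha,\beta$ into measures. Applying Proposition \ref{s2.1}(1) to $x=\alpha p$ gives $\mu_{B}\!\left(x\right)=\alpha\,\mu_{B}\!\left(p\right)=\alpha$, and likewise $\mu_{B}\!\left(y\right)=\beta$, precisely because $\mu_{B}\!\left(p\right)=1$. Applying the same proposition to the defining equation $x+y=\left(\alpha+\beta\right)p$ yields
\[
\mu_{B}\!\left(x+y\right)=\left(\alpha+\beta\right)\mu_{B}\!\left(p\right)=\alpha+\beta=\mu_{B}\!\left(x\right)+\mu_{B}\!\left(y\right),
\]
which is the desired identity. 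The whole argument is essentially a bookkeeping computation once the pivot is chosen with unit measure.

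The one point requiring genuine care, rather than calculation, is the well-definedness of $x+y$: Definition \ref{d3.3} fixes a pivot $p$, and I must confirm that the conclusion does not depend on that choice. This is already secured by the Proposition preceding Definition \ref{d3.3}, which shows $\left(\alpha+\beta\right)p=\left(\alpha'+\beta'\right)p'$ for any two pivots, so $x+y$ is a well-defined element of $\mathfrak{R}$ independent of $p$; consequently $\mu_{B}\!\left(x+y\right)$ is unambiguous and the identity above holds for every admissible choice. I therefore expect the main obstacle to be purely presentational—choosing the pivot $p$ with $\mu_{B}\!\left(p\right)=1$ so that the measures of $x$ and $y$ coincide with their pivot-coefficients—after which Proposition \ref{s2.1}(1) applied three times closes the proof immediately.
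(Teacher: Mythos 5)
Your proof is correct and follows essentially the same route as the paper: both arguments take the invertible element $p=\prod_{i=1}^{n}b_{i}^{k_{i}}$ (with the common exponents supplied by Claim \ref{s3.3}) as the pivot, express $x$ and $y$ in terms of it, and read off the measure of $\left(\mu_{B}\!\left(x\right)+\mu_{B}\!\left(y\right)\right)p$. The only cosmetic difference is that you invoke Proposition \ref{s2.1}(1) to identify the pivot coefficients with measures, where the paper simply observes that the resulting expression is the unique expansion of $x+y$ relative to $B$.
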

\begin{proof}
Let $x=\mu_{B}\left(x\right)\prod_{i=1}^{n}b_{i}^{k_{i}}$ and $y=\mu_{B}\left(y\right)\prod_{i=1}^{n}b_{i}^{k_{i}}$
be the expansions of $x$ and $y$ relative to $B=\left\{ b_{1},\ldots.b_{n}\right\} $.
$\prod_{i=1}^{n}b_{i}^{k_{i}}$ is invertible, and thus a pivot, so
\[
x+y=\mu_{B}\left(x\right)\prod_{i=1}^{n}b_{i}^{k_{i}}+\mu_{B}\left(y\right)\prod_{i=1}^{n}b_{i}^{k_{i}}=\left(\mu_{B}\left(x\right)+\mu_{B}\left(y\right)\right)\prod_{i=1}^{n}b_{i}^{k_{i}},
\]
so we have obtained the unique expansion of $x+y$ relative to $B$,
and this expansion shows that $\mu_{B}\left(x+y\right)=\mu_{B}\left(x\right)+\mu_{B}\left(y\right)$.$\qedhere$ 
\end{proof}
In words, the measure of a sum of quantities is equal to the sum of
the measures of these quantities. Also, $\mu_{B}\left(\lambda x\right)=\lambda\mu_{B}\left(x\right)$,
so measures represent quantities in a given dimension in the same
way that coordinates represent vectors.

\subsection{Groups of dimensions; cardinality of bases}

If $X$ is commutative then $X/{\sim}$ is also commutative, and in
this section we prove that $X/{\sim}$ is actually a free abelian
group, not only a commutative monoid. 
\begin{prop}
Let $X$ be a quantity space over $K$. For every dimension $\left[x\right]\in X/{\sim}$
there is a unique dimension $\left[x\right]^{-1}\in X/{\sim}$ such
that $\left[x\right]\left[x\right]^{-1}=\left[x\right]^{-1}\left[x\right]=\left[\mathbf{1}\right]$.
\end{prop}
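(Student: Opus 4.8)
The plan is to exhibit, for each dimension, a concrete inverse built from an invertible representative, and then to deduce uniqueness from the general fact that inverses in a monoid are unique. Recall from Proposition \ref{s3.2} that $X/{\sim}$ is a commutative monoid with unit $\left[\mathbf{1}\right]$ and product $\left[u\right]\left[v\right]=\left[uv\right]$, so the statement amounts to showing that this monoid is in fact a group; producing a two-sided inverse for each $\left[x\right]$ is therefore the whole content.

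For existence, I would start from the expansion $x=\mu\prod_{i=1}^{n}b_{i}^{k_{i}}$ of an arbitrary representative $x$ of the dimension $\left[x\right]$. Since $1\cdot x=\mu\prod_{i=1}^{n}b_{i}^{k_{i}}$, the invertible element $w=\prod_{i=1}^{n}b_{i}^{k_{i}}$ satisfies $x\sim w$; this is exactly the observation used in the proof of the Corollary that every realm contains an invertible element, so $\left[x\right]=\left[w\right]$. Now $w$ is a product of invertible basis elements, hence invertible, with $w^{-1}=\prod_{i=1}^{n}b_{i}^{-k_{i}}$, and by commutativity $ww^{-1}=w^{-1}w=\prod_{i=1}^{n}b_{i}^{0}=\mathbf{1}$. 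Setting $\left[x\right]^{-1}=\left[w^{-1}\right]$ and applying $\left[u\right]\left[v\right]=\left[uv\right]$ then gives $\left[x\right]\left[x\right]^{-1}=\left[ww^{-1}\right]=\left[\mathbf{1}\right]$, and likewise $\left[x\right]^{-1}\left[x\right]=\left[\mathbf{1}\right]$.

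For uniqueness I would invoke the standard monoid argument: if both $\left[y\right]$ and $\left[z\right]$ satisfy $\left[x\right]\left[y\right]=\left[y\right]\left[x\right]=\left[\mathbf{1}\right]$ and $\left[x\right]\left[z\right]=\left[z\right]\left[x\right]=\left[\mathbf{1}\right]$, then associativity yields $\left[y\right]=\left[y\right]\left[\mathbf{1}\right]=\left[y\right]\!\left(\left[x\right]\left[z\right]\right)=\left(\left[y\right]\left[x\right]\right)\!\left[z\right]=\left[\mathbf{1}\right]\left[z\right]=\left[z\right]$.

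The only substantive step is the existence half, and the crux there is that every dimension contains an invertible representative; once that is secured, the basis expansion hands over an explicit inverse. I therefore expect the main obstacle to be purely bookkeeping---verifying that $x\sim\prod_{i=1}^{n}b_{i}^{k_{i}}$ and that $\prod_{i=1}^{n}b_{i}^{-k_{i}}$ inverts $\prod_{i=1}^{n}b_{i}^{k_{i}}$---rather than anything conceptually difficult, since invertibility of the chosen representative is already guaranteed by the machinery of pivots and Proposition \ref{s2.6}.
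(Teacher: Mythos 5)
Your proof is correct and follows essentially the same route as the paper: both pass from $x$ to the invertible representative $\prod_{i=1}^{n}b_{i}^{k_{i}}$ of its dimension, take $\prod_{i=1}^{n}b_{i}^{-k_{i}}$ to produce the inverse class, and settle uniqueness by the standard monoid computation $\left[x\right]^{-1}=\left[x\right]^{-1}\left(\left[x\right]\left[z\right]\right)=\left(\left[x\right]^{-1}\left[x\right]\right)\left[z\right]=\left[z\right]$.
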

\begin{proof}
Let $x=\mu\prod_{i=1}^{n}b_{i}^{k_{i}}$ be the unique expansion of
$x$ relative to the basis $\left\{ b_{1},\ldots,b_{n}\right\} $,
and set $y=1\prod_{i=1}^{n}b_{i}^{k_{i}}$ and $z=1\prod_{i=1}^{n}b_{i}^{-k_{i}}$.
(By Lemma \ref{s3.3}, $y$ depends on $\left[x\right]$, but not
on its representative $x$.) Then $\left[x\right]=\left[y\right]$
and $\left[y\right]\left[z\right]=\left[z\right]\left[y\right]=\left[zy\right]=\left[\mathbf{1}\right]$,
so $\left[x\right]\left[z\right]=\left[z\right]\left[x\right]=\left[\mathbf{1}\right]$.
Furthermore, if $\left[x\right]^{-1}\left[x\right]=\mathbf{1}$ then
$\left[x\right]^{-1}=\left[x\right]^{-1}\left(\left[x\right]\left[z\right]\right)=\left(\left[x\right]^{-1}\left[x\right]\right)\left[z\right]=\left[z\right]$.$\qedhere$

\newpage{}
\end{proof}
Recall that a (finite) basis for an abelian group $G$ is a set $\left\{ b_{1},\ldots,b_{n}\right\} $
of elements of $G$ such that every $x\in G$ has a unique expansion
$x=\prod_{i=1}^{n}b_{i}^{k_{i}}$, and that a free (finitely generated)
abelian group is an abelian group for which such a basis exists.
\begin{prop}
\label{s3.7}Let $X$ be a quantity space over $K$. 
\begin{enumerate}
\item If $B=\left\{ b_{1},\ldots,b_{n}\right\} $ is a basis for $X$, then
$B^{*}=\left\{ \left[b_{1}\right],\ldots,\left[b_{n}\right]\right\} $
is a basis with the same cardinality for $X/{\sim}$. 
\item Conversely, if $B^{*}=\left\{ \left[b_{1}\right],\ldots,\left[b_{n}\right]\right\} $,
where each $b_{i}$ is invertible, is a basis for $X/{\sim}$, then
$B=\left\{ b_{1},\ldots,b_{n}\right\} $ is a basis with the same
cardinality for $X$. 
\end{enumerate}
\end{prop}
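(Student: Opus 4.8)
The plan is to exploit the monoid homomorphism $h:X\to X/{\sim}$, $x\mapsto[x]$, from Proposition \ref{s3.2}, together with the observation that scaling an element does not change its realm. Concretely, since $\mu\prod_i b_i^{k_i}\sim\prod_i b_i^{k_i}$ (because $x\sim\lambda x$ for every $\lambda\in K$), applying $h$ and using $[x][y]=[xy]$ gives
\[
\Bigl[\mu\prod_{i=1}^n b_i^{k_i}\Bigr]=\Bigl[\prod_{i=1}^n b_i^{k_i}\Bigr]=\prod_{i=1}^n[b_i]^{k_i}.
\]
This identity is the bridge between expansions in $X$ and expansions in the free abelian group $X/{\sim}$, where I understand a basis to be a free generating set, i.e.\ a set $\{[b_i]\}$ relative to which every element has a unique expression $\prod_i[b_i]^{k_i}$ with integer exponents.

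For part (1), I would take $B=\{b_1,\dots,b_n\}$ a basis for $X$ and show that $B^{*}=\{[b_1],\dots,[b_n]\}$ generates $X/{\sim}$ freely. Existence of an expansion is immediate from the displayed identity applied to the expansion $x=\mu\prod_i b_i^{k_i}$ furnished by Definition \ref{d2.3}. For uniqueness, if $\prod_i[b_i]^{k_i}=\prod_i[b_i]^{\ell_i}$ then $\prod_i b_i^{k_i}\sim\prod_i b_i^{\ell_i}$, and Claim \ref{s3.3} (equivalence of commensurability with equality of exponent vectors) forces $k_i=\ell_i$ for every $i$. The same application of Claim \ref{s3.3} to $b_i$ and $b_j$ shows $[b_i]\neq[b_j]$ when $i\neq j$, so $|B^{*}|=n$.

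For part (2), I would start from a basis $B^{*}=\{[b_1],\dots,[b_n]\}$ of $X/{\sim}$ with each $b_i$ invertible, and verify the two requirements of Definition \ref{d2.3} for $B=\{b_1,\dots,b_n\}$. Given $x\in X$, freeness of $B^{*}$ yields unique integers $k_i$ with $[x]=\prod_i[b_i]^{k_i}=[\prod_i b_i^{k_i}]$, so $x\sim\prod_i b_i^{k_i}$; since the $b_i$ are invertible, $\prod_i b_i^{k_i}$ is invertible and hence a pivot for its realm by Proposition \ref{s3.5-1}, giving a unique $\mu\in K$ with $x=\mu\prod_i b_i^{k_i}$. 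For uniqueness of the whole expansion, if $x=\mu\prod_i b_i^{k_i}=\mu'\prod_i b_i^{\ell_i}$, applying $h$ and freeness of $B^{*}$ gives $k_i=\ell_i$, whereupon Proposition \ref{s3.5-1}(2) (cancellation of a scalar against an invertible quantity) gives $\mu=\mu'$. Distinctness of the $b_i$, hence $|B|=n$, follows because $[b_i]=[b_j]$ is impossible for distinct elements of a basis of $X/{\sim}$.

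The existence arguments are routine; the step needing the most care is matching the notion of uniqueness on the two sides. The main obstacle is that uniqueness in $X$ has two layers, namely the integer exponents $k_i$ and the scalar $\mu$, whereas uniqueness in $X/{\sim}$ constrains only the exponents. Bridging this gap requires, in one direction, Claim \ref{s3.3} to transfer exponent-uniqueness, and in the other direction the pivot and cancellation properties of invertible quantities (Proposition \ref{s3.5-1} and the corollary that every realm has a pivot) to recover and pin down the scalar $\mu$. Once these tools are in place, both inclusions and the equality of cardinalities follow.
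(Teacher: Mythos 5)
Your proposal is correct and follows essentially the same route as the paper's own proof: the bridge $\bigl[\mu\prod_{i}b_{i}^{k_{i}}\bigr]=\prod_{i}\left[b_{i}\right]^{k_{i}}$ via the quotient homomorphism, Claim \ref{s3.3} to transfer exponent-uniqueness in part (1), and Proposition \ref{s3.5-1} (invertible elements as pivots, plus scalar cancellation) to recover and pin down the scalar in part (2). The only cosmetic difference is that you establish distinctness of the $\left[b_{i}\right]$ from the exponent-vector criterion of Claim \ref{s3.3}, whereas the paper argues via the measures $\mu_{B}\!\left(b_{i}\right)=1$; both rest on the same claim.
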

\begin{proof}
The unique expansions of $b_{i},b_{i'}\in B$ relative to $B$ are
$b_{i}=1b_{i}$ and $b_{i'}=1b_{i'}$, respectively, so $\mu_{B}\left(b_{i}\right)=\mu_{B}\left(b_{i'}\right)=1$.
Hence, $\left[b_{i}\right]=\left[b_{i'}\right]$ implies $b_{i}=b_{i'}$
since $b_{i}\sim b_{i'}$ implies $\mu_{B}\left(b_{i'}\right)b_{i}=\mu_{B}\left(b_{i}\right)b_{i'}$
by Lemma \ref{s3.3}, so the surjective mapping $b_{i}\mapsto\left[b_{i}\right]$
is injective as well.\\
(1). Let $\left[x\right]$ be an arbitrary dimension in $X/{\sim}$.
As $B$ generates $X$, $x=\mu\prod_{i=1}^{n}b_{i}^{k_{i}}$ for some
$\mu\in K$ and some integers $k_{1},\ldots,k_{n}$, so $\left[x\right]=\left[\mu\prod_{i=1}^{n}b_{i}^{k_{i}}\right]=\left[\prod_{i=1}^{n}b_{i}^{k_{i}}\right]=\prod_{i=1}^{n}\left[b_{i}\right]^{k_{i}}$,
so $B^{*}$ generates $X/{\sim}$.

Also, if $\left[x\right]=\prod_{i=1}^{n}\left[b_{i}\right]^{k_{i}}=\prod_{i=1}^{n}\left[b_{i}\right]^{\ell{}_{i}}$,
then $\left[\prod_{i=1}^{n}b_{i}^{k_{i}}\right]=\left[\prod_{i=1}^{n}b_{i}^{\ell{}_{i}}\right]$,
so $k_{i}=\ell_{i}$ for $i=1,\ldots,n$ by Lemma \ref{s3.3}.\\
 (2). Let $x$ be an arbitrary quantity in $X$. As $B^{*}$ generates
$X/{\sim}$, we have $\left[x\right]=\prod_{i=1}^{n}\left[b_{i}\right]^{k_{i}}=\left[\prod_{i=1}^{n}b_{i}^{k_{i}}\right]$,
and as $\prod_{i=1}^{n}b_{i}^{k_{i}}$ is invertible, Proposition
\ref{s3.5-1} (1) implies that there exists some $\kappa\in K$ and
integers $k_{1},\ldots,k_{n}$ such that $x=\kappa\prod_{i=1}^{n}b_{i}^{k_{i}}$.

Finally, if $x=\mu\prod_{i=1}^{n}b_{i}^{k_{i}}=\nu\prod_{i=1}^{n}b_{i}^{\ell_{i}}$
then $\left[\mu\prod_{i=1}^{n}b_{i}^{k_{i}}\right]=\left[\nu\prod_{i=1}^{n}b_{i}^{\ell_{i}}\right]$,
so $\left[\prod_{i=1}^{n}b_{i}^{k_{i}}\right]=\left[\prod_{i=1}^{n}b_{i}^{\ell_{i}}\right]$,
so $\prod_{i=1}^{n}\left[b_{i}\right]^{k_{i}}=\prod_{i=1}^{n}\left[b_{i}\right]^{\ell_{i}}$,
so $k_{i}=\ell_{i}$ for $i=1,\ldots,n$, since $B^{*}$ is a basis
for $X/\sim$. Also, $x=\mu y=\nu y$, where $y$ is invertible, so
$\mu=\nu$ by Proposition \ref{s3.5-1} (2).$\qedhere$
\end{proof}
\begin{cor}
\label{s3.8}Let $X$ be a quantity space over $K$. 
\begin{enumerate}
\item $X/{\sim}$ is a free abelian group. 
\item Any two bases for $X/{\sim}$ have the same number of elements, any
basis for $X$ has the same number of elements as any basis for $X/{\sim}$,
and any two bases for $X$ have the same number of elements. 
\end{enumerate}
\end{cor}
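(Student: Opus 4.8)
The plan is to deduce both parts from Proposition \ref{s3.7} together with the single nontrivial external fact that the rank of a finitely generated free abelian group is well-defined. For part (1), I would first note that $X/{\sim}$ is a commutative monoid by Proposition \ref{s3.2} (commutative because $X$ is), and that every element of $X/{\sim}$ is invertible by the immediately preceding proposition; hence $X/{\sim}$ is an abelian group. Since $X$ is a quantity space it carries a finite basis $B=\{b_1,\ldots,b_n\}$, and Proposition \ref{s3.7}(1) turns this into a basis $B^{*}=\{[b_1],\ldots,[b_n]\}$ for $X/{\sim}$. But a basis for $X/{\sim}$ is by definition a finite set whose integer-power products express every element \emph{uniquely}, which is exactly an isomorphism $X/{\sim}\cong\mathbb{Z}^{n}$; this exhibits $X/{\sim}$ as free abelian of finite rank $n$ and settles (1).

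For part (2) the crux is to show that any two bases $B^{*}$ and $C^{*}$ for the free abelian group $X/{\sim}$ have the same cardinality. I would prove this by reduction modulo $2$: if the two bases give $X/{\sim}\cong\mathbb{Z}^{m}$ and $X/{\sim}\cong\mathbb{Z}^{p}$, then passing to the quotient $G/G^{2}$ by the subgroup of squares (writing the group multiplicatively, as $X/{\sim}$ is) produces the $\mathbb{F}_2$-vector spaces $(\mathbb{Z}/2)^{m}$ and $(\mathbb{Z}/2)^{p}$, whose finite cardinalities $2^{m}$ and $2^{p}$ must coincide, forcing $m=p$. Tensoring with $\mathbb{Q}$ and comparing $\mathbb{Q}$-dimensions would serve equally well. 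This invariance of rank is the one genuinely nontrivial ingredient, and I expect it to be the main obstacle; I would state it as a short self-contained lemma rather than appeal to outside literature, since the $\bmod\,2$ argument is elementary.

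With rank-invariance in hand, the three assertions of (2) all follow by routing through $X/{\sim}$ via Proposition \ref{s3.7}(1), which sends a basis $B$ for $X$ to a basis $B^{*}$ for $X/{\sim}$ of the same cardinality. First, two bases for $X/{\sim}$ have equal size directly by rank-invariance. Second, given a basis $B$ for $X$ and any basis $C^{*}$ for $X/{\sim}$, we have $|B|=|B^{*}|=|C^{*}|$, the last equality by rank-invariance. Third, two bases $B,B'$ for $X$ yield bases $B^{*},B'^{*}$ for $X/{\sim}$ with $|B|=|B^{*}|=|B'^{*}|=|B'|$. Only Proposition \ref{s3.7}(1) is needed for this counting; Proposition \ref{s3.7}(2) confirms, as a reassuring aside, that $B\mapsto B^{*}$ is in fact a bijection onto the bases of $X/{\sim}$ with invertible representatives, but I would not invoke it in the cardinality argument itself.
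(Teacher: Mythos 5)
Your proposal is correct and takes essentially the same route as the paper: part (1) is read off from Proposition \ref{s3.7}(1) together with the group structure of $X/{\sim}$, and part (2) is reduced via Proposition \ref{s3.7} to the invariance of rank for finitely generated free abelian groups, which the paper simply cites as a known fact. The only difference is that you supply a self-contained mod-$2$ proof of that rank-invariance (and correctly observe that only Proposition \ref{s3.7}(1) is needed for the counting), which is a harmless elaboration rather than a different argument.
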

\begin{proof}
(1) is immediate. To prove (2), it suffices to note that any two bases
for a free abelian group have the same cardinality.$\qedhere$ 
\end{proof}

\subsection{Concerning algebraic structures acting on monoids of quantities}

In theoretical discussions about measurement, the measure of a physical
quantity is usually considered to be a real number, so it is natural
to let the field associated with a quantity space be the real numbers
$\mathbb{R}$. However, some physical quantities, such as a distance
or a mass, can only have positive or non-negative measures. This suggests
that the field acting on the monoid of quantities should be replaced
by a more general structure. In \cite{key-4}, the concept of a scaloid
over a field is replaced by the concept of a scaloid over a so-called
\emph{scalar system.} A scalar system can be conveniently defined
as a subset of a field, inheriting addition and multiplication in
the field, such that it is closed under addition and its non-zero
elements constitute a group under multiplication. The real numbers,
the non-negative real numbers, and the positive real numbers are obvious
examples of scalar systems.

There is, however, a way to accommodate constrained sets of measures
without modifying the notion of a quantity space: we let functions
of the form $\mu_{B}:\mathfrak{D}\rightarrow\mathbb{R}$ be \emph{partial
functions}, defined on some subset $\mathfrak{S}$ of $\mathfrak{D}$,
so that, for example, $\mu_{B}\left(x\right)>0$ for all $x\in\mathfrak{S}$.
This approach is quite flexible; it also works, for example, when
we require $\mu_{B}\left(\mathfrak{S}\right)$ to be a discrete set
of numbers.

It is an elementary observation that physical quantities are ordered;
three grams is more than two grams, and so on. This means that it
should be possible to equip a quantity space with some kind of order
relation, not considered so far in this article. Such an order relation
can be induced by the order on an ordered field acting on the monoid
of quantities.

Specifically, let $X$ be a quantity space over an ordered field $K$.
To induce an order relation, based on the order on $K$, on $X$,
we need to to choose an orientation for $X$. Analogous to how we
proceed for vector spaces, we choose an orientation by fixing a basis
$B=\left\{ b_{1},\ldots,b_{n}\right\} $ for $X$ and stipulating
that for any $\mathfrak{D}\in X/{\sim}$ and $x\in\mathfrak{D}$ we
have $x\geq0_{\mathfrak{D}}$ if and only if there are integers $k_{1},\ldots,k_{n}$
and $\mu\in K$ such that $x=\mu\prod_{i=1}^{n}b_{i}^{k_{i}}$ where
$\mu\geq0$. Moreover, we stipulate that $x\geq y$ if and only if
$\left(x-y\right)\geq0_{\mathfrak{D}}$.

In view of this construction, the fact that $\mathbb{R}$ is an ordered
field gives another reason for considering quantity spaces over $\mathbb{R}$.

\section{In conclusion}

As shown in the Introduction, scalable monoids complement rings and
modules from an abstract mathematical point of view. From the point
of view of applications, a quantity space is a natural counterpart
to a vector space: quantities and vectors are both fundamental notions
in physics and quantitative sciences generally. Recall that the transformation
of vector space theory into an axiomatic, ``coordinate-free'' form
was completed during the interwar period in Europe, more than three
quarters of a century ago, but the corresponding formulation of an\linebreak{}
 axiomatic, ``coordinate-free'' notion of quantities has not yet
been completed, in my opinion, despite important contributions (e.g.,
\cite{key-3,key-6,key-7}).\linebreak{}
 Indeed, the research reported here started as an attempt to model
quantities, not as an attempt to fill a mathematical lacuna.

A crucial feature of systems of quantities is that two quantities
can be added if and only if they are ``commensurable'', so mathematical
models of systems of quantities should reflect this peculiar property,
not shared by numbers without sorts. An algebraic structure satisfying
this requirement can of course be found if sufficiently complicated
algebraic structures are considered. However, it would seem to be
desirable that systems of quantities be modeled by a formal mathematical
structure defined in a simple, direct manner, similar to the definition
of a vector space, and in such a way that the crucial ``commensurability''
feature is a natural consequence of this formulation. The definitions
given here would seem to pass this test.

Applications of quantity spaces, considered only informally and superficially
here, are discussed at some length in \cite{key-4,key-5}. 

\newpage{}

\end{document}